\newtheorem{theorem}{Theorem}[section]
\newtheorem{lemma}[theorem]{Lemma}
\newtheorem{corollary}[theorem]{Corollary}
\newtheorem{claim}{Claim}
\newtheorem*{claim*}{Claim}
\theoremstyle{definition}
\theoremstyle{remark}
\newtheorem*{remark*}{Remark}
\newtheorem*{notation*}{Notation}
\numberwithin{equation}{section}
\newcommand\eps{\varepsilon}
\newcommand\Z{\mathbb{Z}}
\newcommand\group{\Z/N\Z}
\title{\vspace{-0.7cm}A connection between matchings and removal in abelian groups}
\author{James Aaronson\thanks{Contact email:\texttt{ james.aaronson@maths.ox.ac.uk}}}
\date{}
\begin{document}
\maketitle

\begin{abstract}
In a finite abelian group $G$, define an additive matching to be a collection of triples $(x_i, y_i, z_i)$ such that $x_i + y_j + z_k = 0$ if and only if $i = j = k$. In the case that $G = \mathbb{F}_2^n$, Kleinberg, building on work of Croot-Lev-Pach and Ellenberg-Gijswijt, proved a polynomial upper bound on the size of an additive matching. Fox and Lov\'{a}sz used this to deduce polynomial bounds on Green's arithmetic removal lemma in $\mathbb{F}_2^n$. 

If $G$ is taken to be an arbitrary finite abelian group, the questions of bounding the size of an additive matching and giving bounds for Green's arithmetic removal lemma are much less well understood. In this note, we adapt the methods of Fox and Lov\'{a}sz to prove that, provided we can assume a sufficiently strong bound on the size of an additive matching in cyclic groups, a similar bound should hold in the case of removal.
\end{abstract}

\section{Introduction}\label{sec:1}

In an abelian group $G$, define a \emph{triangle} to be a triple of elements $x$, $y$ and $z$ with $x + y + z = 0$. Green's arithmetic triangle removal lemma \cite{green2005szemeredi} states that for any $\eps > 0$, there is a $\delta > 0$ such that the following holds. Whenever $X$, $Y$ and $Z$ are subsets of $G$ such that there are at most $\delta N^2$ triangles $x + y + z = 0$ with $x \in X$, $y \in Y$ and $z \in Z$, we can remove at most $\eps N$ elements from $X$, $Y$ and $Z$ to remove all of the triangles. The bounds in \cite{green2005szemeredi} are quite weak; $1/\delta$ is given as a tower of twos of height polynomial in $1/\eps$. The best known bounds for this problem in general are still of tower type.

In \cite{FLpaper}, Fox and Lov\'{a}sz proved a much stronger bound on $\delta$ in the case of $G = \mathbb{F}_p^n$ for a fixed prime $p$; namely, that $1/\delta$ is bounded by a polynomial in $1/\eps$.  Define an \emph{additive matching} to be a collection of triples $(x_i, y_i, z_i)$ such that $x_i + y_j + z_k = 0$ if and only if $i = j = k$. These are also called tricolored sum-free sets, and can be represented by $(X,Y,Z)$, where $X = \{x_i\}$, $Y = \{y_i\}$ and $Z = \{z_i\}$.  Building on the groundbreaking work on the cap set problem by Croot-Lev-Pach \cite{1605.01506} and subsequent work by Ellenberg-Gijswijt \cite{1605.09223}, Kleinberg \cite{1605.08416} gave a polynomial upper bound for the size of an additive matching in $G$ in the case that $G = \mathbb{F}_2^n$, and Blasiak-Church-Cohn-Grochow-Naslund-Sawin-Umans \cite{1605.06702} extended this to $\mathbb{F}_q^n$ for a fixed prime power $q$. The argument by Fox and Lov\'{a}sz made use of these results to prove the polynomial bounds on removal.

Polynomial bounds on removal are much stronger than could possibly hold in general groups. Indeed, using Behrend's construction \cite{MR0018694} of a large subset of $\group$ with no 3-term arithmetic progressions, it is possible \cite{MR519318} to show that the best one could hope for is

\begin{equation*}
\eps \ll \exp\left(-c \sqrt{\log(1/\delta)}\right).
\end{equation*}

\noindent The goal of this note is to adapt the arguments of Fox and Lov\'{a}sz to show that, in the context of cyclic groups, good bounds on additive matchings give good bounds on removal.

Assume that, in a cyclic group of order $M$,  the density of an additive matching is bounded above by $f(M)$ for some function $f$. Assume that $f(M)$ can be taken to be decreasing as $M$ increases, but that $Af(A) < Bf(B)$ for $A < B$; these conditions correspond to the claim that the maximum size of an additive matching increases as the size of the group increases, but the maximum density decreases. Observe that Behrend's example guarantees that 

\begin{equation*}
f(M) \gg \exp\left(-c \sqrt{\log(M)}\right)
\end{equation*}

\noindent because, if $A$ is a progression free set, then $(A, -2A, A)$ is an additive matching.

Suppose further that there exists a function $g$ such that $g(\rho)$ increases as $\rho$ decreases, $\sum_{i=1}^{\infty} \frac{1}{g(2^{-i})} \leq \frac{1}{4}$ and $g(\rho)^2 f\left(\frac{1}{g(\rho)\rho}\right)$ is decreasing as $\rho$ decreases for $\rho < \alpha$, for some absolute constant $\alpha$. $g$ plays the same role here as in \cite{FLpaper}.

We are now ready to state Theorem \ref{MainTheorem}.

\begin{theorem}\label{MainTheorem}
Suppose that $A$, $B$ and $C$ are subsets of $\group$ for some $N \in \mathbb{N}$ with the property that there are at most $\delta N^2$ triangles $a + b + c = 0$ with $a \in A$, $b \in B$ and $c \in C$.

Then, we can remove all of the triangles by deleting at most $\eps N$ elements from $A$, $B$ and $C$, where $\eps$ satisfies

\begin{equation}\label{MTEQ}
\eps \ll g(\delta) \sqrt{f\left(\frac{1}{g(\delta)\delta}\right)}.
\end{equation}

\end{theorem}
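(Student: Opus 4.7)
The strategy is to adapt the dyadic-decomposition scheme of Fox and Lov\'{a}sz to the cyclic setting, replacing the cap-set bound they invoke in $\mathbb{F}_p^n$ by the hypothesis $f$ on densities of additive matchings in $\group$. The goal is to produce a vertex cover of the triangle hypergraph on $A \cup B \cup C$ of size at most $\eps N$, and we carry the parameter $\rho = \delta$ throughout.

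First I would dyadically classify each element by its triangle degree. For $a \in A$ write $d(a) = \#\{(b,c) \in B \times C : a+b+c = 0\}$, and set
\[
A_i = \{a \in A : d(a) \in [2^{i-1}\rho N,\; 2^{i}\rho N)\},
\]
and decompose $B$, $C$ analogously into $B_j$, $C_k$. The triangle-count hypothesis $\sum_a d(a) \le \delta N^2$ immediately controls $|A_i|$ at each scale, and the levels below a well-chosen threshold can be deleted wholesale, since their total triangle contribution is a negligible fraction of $\delta N^2$. What remains is a $O(\log^3(1/\rho))$-fold family of dyadic blocks in which every vertex meets a comparable number of triangles, and it suffices to cover each block.

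The core of the argument is then a \emph{matching-to-cover} step on a single block: either the triangle structure of $(A_i,B_j,C_k)$ admits a vertex cover of the target size, or one can extract from it an additive matching in $\group$ whose size violates the hypothesis $f(N)$. Because additive matchings have density at most $f(M)$ in a cyclic group of order $M$, equating the trivial cover bound against the matching bound $M f(M)$ pins down the effective scale at which the argument is tight; carrying out the optimisation, this scale lands at $M \approx 1/(g(\delta)\delta)$ and therefore produces the factor $f\bigl(1/(g(\delta)\delta)\bigr)$ in the conclusion. The square root should then appear once one applies the additive matching hypothesis symmetrically to \emph{two} coordinates and uses that $(b,c)$ determines $a$ in a triangle, in the spirit of a Cauchy--Schwarz split. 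Summing over the dyadic levels and invoking the monotonicity of $g(\rho)^2 f(1/(g(\rho)\rho))$ together with $\sum_i g(2^{-i})^{-1} \le \tfrac14$ telescopes the error into a single bound of the required form $\eps \ll g(\delta)\sqrt{f(1/(g(\delta)\delta))}$; here $g$ plays exactly the role it does in Fox--Lov\'{a}sz, absorbing the logarithmic overhead of dyadic summation.

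The main obstacle, and the step requiring genuine adaptation, is the upgrade from a vertex-disjoint triple system to a bona fide additive matching. A greedy algorithm on a dense tripartite block immediately produces many disjoint triangles, but an additive matching demands in addition that every off-diagonal equation $x_i + y_j + z_k = 0$ be ruled out. In $\mathbb{F}_p^n$ this is streamlined by the abundance of subgroups and by the Croot--Lev--Pach polynomial method being insensitive to such refinements; in $\group$ one cannot appeal to any subgroup structure beyond divisors of $N$. I would control this obstruction by a forbidden-sum accounting: after choosing $k$ triangles, the set of $(a,b,c)$ that would create an off-diagonal relation with the existing matching has size $O(k^2)$, and the condition $Af(A) < Bf(B)$ for $A < B$ ensures that the pool of available triangles in each dyadic block dominates this forbidden mass until the matching reaches the density prescribed by $f$. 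This is where the technical heart of the proof should reside, and it is the only place where the cyclic-group hypotheses on $f$ and $g$ are used in an essentially different way from the $\mathbb{F}_p^n$ argument.
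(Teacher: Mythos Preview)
Your dyadic-decomposition outer layer and the greedy reduction to disjoint triangles are fine and match the paper. The genuine gap is in the ``matching-to-cover'' step, specifically in how you manufacture the additive matching and in which group it lives. Your greedy forbidden-sum argument builds a matching inside $\group$ itself: after $k$ steps you forbid $O(k^2)$ values of each coordinate, hence $O(k^2\delta_2 N)$ triangles (each forbidden vertex has degree at most $\delta_2 N$), and since the block contains $\gtrsim |X|\delta_1 N$ triangles you can continue only until $k\approx\sqrt{|X|\,\delta_1/\delta_2}$. Applying the matching hypothesis then gives $\sqrt{|X|}\lesssim Nf(N)$, i.e.\ $|X|\lesssim N^2f(N)^2$, which is vacuous because $f(N)$ is nowhere assumed to be below $N^{-1/2}$ (indeed for Behrend-type $f$ it is far above). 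Your sentence about ``the effective scale landing at $M\approx 1/(g(\delta)\delta)$'' is exactly what needs to happen, but nothing in your construction produces a matching in a group of that size; the matching you build lives in $\group$ and is bounded only by $Nf(N)$.

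The idea you are missing is a random projection that transfers the problem to a \emph{small} cyclic group. In the paper this is Lemma~\ref{BigLem}: one picks a random arithmetic progression $I_X=a+[-l,l]d$ of length $L\asymp 1/\delta_2$ (and companions $I_Y$, $I_Z$), and declares a triangle \emph{good} if it lies in $I_X\times I_Y\times I_Z$ and each vertex is in no other such triangle. The point is that the degree cap $\delta_2 N$ together with $L\delta_2\le 1/10$ forces each valid triangle to be good with probability $\ge 2/5$, so in expectation one finds $\gtrsim |X|\delta_1/(\delta_2^2 N)$ good triangles. These map, via $a+rd\mapsto r$, into $\Z/M\Z$ with $M=\delta_2^{-1}$ as an honest additive matching, and \emph{now} the hypothesis gives the bound $Mf(M)=\delta_2^{-1}f(\delta_2^{-1})$, yielding $|X|/N\ll(\delta_2/\delta_1)f(\delta_2^{-1})$. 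Feeding this into the dyadic block with $\delta_2/\delta_1\approx g(\delta')$ and $|X|/N\approx\eps/g(\delta')$ is what produces $\eps^2\ll g(\delta)^2 f(1/(g(\delta)\delta))$; the square root comes from this algebra, not from a Cauchy--Schwarz split. Without this projection-to-a-small-group step the argument cannot reach the stated bound.
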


We can deduce some consequences of this:

\begin{corollary}\label{HappyCor}
Suppose that we have the best possible bound on the size of an additive matching, namely a Behrend-type bound. In particular, we can take $f(N)$ to be $\exp(-c\sqrt{\log{N}})$ for some constant $c$. Then $g(\rho) = k\log^2(1/\rho)$ suffices, and we deduce the bound

\begin{equation*}
\eps \ll \exp(-c_1\sqrt{\log 1/\delta})
\end{equation*}

\noindent for some other constant $c_1$.

\end{corollary}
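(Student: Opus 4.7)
The plan is to simply check that the choice $g(\rho) = k\log^2(1/\rho)$, with $k$ chosen sufficiently large, satisfies the three hypotheses on $g$, and then to substitute everything into the bound (\ref{MTEQ}) of Theorem \ref{MainTheorem}. No new ideas are needed; this is a direct computation.

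First I would verify monotonicity: $g(\rho) = k\log^2(1/\rho)$ is visibly increasing as $\rho$ decreases. Next, for the summability hypothesis, I would compute
\begin{equation*}
\sum_{i=1}^\infty \frac{1}{g(2^{-i})} = \sum_{i=1}^\infty \frac{1}{k i^2 \log^2 2},
\end{equation*}
which is a convergent series and can be made at most $1/4$ by taking $k$ large enough. Third, I would check the decreasing-ness of $g(\rho)^2 f(1/(g(\rho)\rho))$. Writing out
\begin{equation*}
\log\!\left(\tfrac{1}{g(\rho)\rho}\right) = \log(1/\rho) - 2\log\log(1/\rho) - \log k,
\end{equation*}
so that $f(1/(g(\rho)\rho)) = \exp\bigl(-c\sqrt{\log(1/\rho) - O(\log\log(1/\rho))}\bigr)$, whereas $g(\rho)^2 = k^2\log^4(1/\rho)$ grows only polylogarithmically in $1/\rho$; the exponential factor dominates, so the product decreases for all sufficiently small $\rho$, as required.

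Finally I would substitute into (\ref{MTEQ}):
\begin{equation*}
\eps \ll g(\delta)\sqrt{f\!\left(\tfrac{1}{g(\delta)\delta}\right)} \ll \log^2(1/\delta) \cdot \exp\!\left(-\tfrac{c}{2}\sqrt{\log(1/\delta) - O(\log\log(1/\delta))}\right).
\end{equation*}
The polylogarithmic prefactor $\log^2(1/\delta)$ and the lower-order $\log\log$ term inside the square root are both dominated by an arbitrarily small shrinkage of the constant in the exponential, so we may absorb them to obtain $\eps \ll \exp(-c_1\sqrt{\log(1/\delta)})$ for some $c_1 < c/2$.

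There is no real obstacle here; if anything, the only minor nuisance will be organising the absorption of the $\log^2(1/\delta)$ prefactor and of the $\log\log(1/\delta)$ correction inside $\sqrt{\cdot}$ cleanly into the single exponential with a worse constant. This is standard and just requires being careful about the regime $\delta < \alpha$ guaranteed by the hypothesis on $g$.
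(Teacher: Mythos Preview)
Your proposal is correct and is exactly the verification the paper has in mind; the paper itself gives no explicit proof of this corollary, leaving the check of the hypotheses on $g$ and the substitution into (\ref{MTEQ}) as a routine computation, which is precisely what you carry out.
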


\begin{corollary}\label{SadCor}
Suppose that the much more pessimistic bound $f(N) = \log^{-2 - \gamma} N$ holds, for some constant $\gamma > 0$. Then, we can take $g(\rho) = k\log^{1 + \gamma/3}(1/\rho)$, and we deduce that 

\begin{equation*}
\eps \ll \log({1/\delta})^{-O(1)}.
\end{equation*}

\end{corollary}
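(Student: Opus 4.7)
The plan is a direct verification: check that the proposed $g(\rho)=k\log^{1+\gamma/3}(1/\rho)$ satisfies the three hypotheses on $g$, then plug into \eqref{MTEQ} and compute.

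First I would check the easy hypotheses on $g$. Monotonicity (that $g(\rho)$ increases as $\rho$ decreases) is immediate. For the summability condition, write $g(2^{-i}) = k(i\log 2)^{1+\gamma/3}$, so
\begin{equation*}
\sum_{i=1}^\infty \frac{1}{g(2^{-i})} = \frac{1}{k(\log 2)^{1+\gamma/3}}\sum_{i=1}^\infty \frac{1}{i^{1+\gamma/3}},
\end{equation*}
and since $\gamma/3>0$ the series converges, so choosing $k$ large makes this at most $1/4$.

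Next I would verify the monotonicity of $\rho \mapsto g(\rho)^2 f(1/(g(\rho)\rho))$ for small $\rho$. The point is that
\begin{equation*}
\log\!\left(\frac{1}{g(\rho)\rho}\right) = \log(1/\rho) - \log k - (1+\gamma/3)\log\log(1/\rho) = (1+o(1))\log(1/\rho)
\end{equation*}
as $\rho \to 0$. Therefore
\begin{equation*}
g(\rho)^2\, f\!\left(\frac{1}{g(\rho)\rho}\right) = k^2\log^{2+2\gamma/3}(1/\rho)\cdot \bigl((1+o(1))\log(1/\rho)\bigr)^{-2-\gamma} \sim k^2\log^{-\gamma/3}(1/\rho),
\end{equation*}
which is decreasing in $\log(1/\rho)$, hence increasing in $\rho$, for $\rho$ less than some absolute $\alpha$. (Strict monotonicity for all such $\rho$ is straightforward to check by differentiating, since the leading log-exponent is strictly negative.)

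Finally I would substitute into Theorem \ref{MainTheorem}. With the same asymptotic computation,
\begin{equation*}
\sqrt{f\!\left(\frac{1}{g(\delta)\delta}\right)} \asymp \log^{-1-\gamma/2}(1/\delta),
\end{equation*}
and multiplying by $g(\delta) \asymp \log^{1+\gamma/3}(1/\delta)$ gives
\begin{equation*}
\eps \ll \log^{(1+\gamma/3)-(1+\gamma/2)}(1/\delta) = \log^{-\gamma/6}(1/\delta),
\end{equation*}
which is $\log(1/\delta)^{-O(1)}$ as claimed. The only mild obstacle is the third hypothesis, since $\log(1/(g(\rho)\rho))$ is not literally equal to $\log(1/\rho)$; but the correction terms are of lower order, so the sign of the derivative is governed by the exponent $-\gamma/3$ and monotonicity holds for $\rho$ sufficiently small.
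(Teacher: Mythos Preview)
Your proposal is correct and is precisely the direct verification the paper intends; the paper states Corollary~\ref{SadCor} without proof, leaving the check of the hypotheses on $g$ and the substitution into \eqref{MTEQ} to the reader, which is exactly what you have done. The computations (summability via the $p$-series with exponent $1+\gamma/3>1$, the asymptotic $\log(1/(g(\rho)\rho))\sim\log(1/\rho)$, and the resulting exponent $-\gamma/6$) are all accurate.
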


Observe that we cannot deduce anything nontrivial if the assumption on $f$ is weaker, because of the need for $\sum_{i=1}^{\infty} \frac{1}{g(2^{-i})}$ to converge.

\begin{remark*}
A converse of sorts to Theorem \ref{MainTheorem}, namely that bounds on removal imply similar bounds on the maximal size of an additive matching, is relatively trivial. Indeed, suppose that, whenever subsets $X$, $Y$ and $Z$ of a cyclic group $G = \group$ define at most $\delta N^2$ triangles, the triangles can be removed by deleting at most $\eps N$ elements, where $\eps \ll f(1/\delta)$.

Then, an additive matching of size $\theta N$ defines at most $N = \frac{1}{N} N^2$ triangles, and requires removal of at least $\theta N$ elements to remove the triangles. Thus, $\theta \ll f(N)$, which can be seen to be the partial converse we wanted.

\end{remark*}
Throughout this note, we will use the notation $x \ll y$ to mean that, for some absolute constant $C$ independent of any variables, $x \leq Cy$.

The author is supported by an EPSRC grant EP/N509711/1. The author a DPhil student at Oxford University, and is grateful to his supervisor, Ben Green, for his continued support.

\section{Theorem \ref{MainTheorem} for $N$ prime}

In this section, following the approach in \cite{FLpaper}, we we prove Theorem \ref{MainTheorem}, in the case that $N$ is prime. We start with a lemma, which is an analogue of Lemma 5 from \cite{FLpaper}:

\begin{lemma}\label{BigLem}
Suppose we have three subsets of $\group$, $X$, $Y$ and $Z$, with the property that, for each $x \in X$, there are between $\delta_1 N$ and $\delta_2 N$ elements $y \in Y$ such that $z = -x -y$ is in $Z$. Suppose that the same holds with the positions of $X$, $Y$ and $Z$ permuted.

Then, we deduce that $|X|$ satisfies

\begin{equation}\label{L3Conc}
|X| \ll \frac{\delta_2}{\delta_1} f(\delta_2^{-1})N,
\end{equation}

and similar inequalities for $|Y|$ and $|Z|$.

\end{lemma}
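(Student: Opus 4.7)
The plan is to follow the scheme of Fox--Lov\'asz's Lemma~5: use the regularity of $(X,Y,Z)$ to extract an additive matching whose size gives a nontrivial upper bound on $|X|$ via the hypothesised bound $f$.

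Write $N_Y(x) := \{y \in Y : -x-y \in Z\}$, so that $\delta_1 N \le |N_Y(x)| \le \delta_2 N$ for each $x \in X$, and analogously for $y \in Y$ and $z \in Z$. Counting the triangle set $T := \{(x,y,z) \in X \times Y \times Z : x+y+z=0\}$ via each coordinate gives $|T| \in [\delta_1 N |X|, \delta_2 N |X|]$, and likewise expressions in $|Y|$, $|Z|$; hence $|X|, |Y|, |Z|$ are pairwise comparable up to a factor $\delta_2/\delta_1$, and by symmetry it suffices to bound $|X|$. The existence of any triangle forces $\delta_1 N \ge 1$, so $\delta_2^{-1} \le \delta_1^{-1} \le N$, making $f(\delta_2^{-1})$ a meaningful quantity.

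The core of the proof is to extract from $(X,Y,Z)$ an additive matching $(X_0, Y_0, Z_0)$ of size $|X_0| \gg (\delta_1/\delta_2)|X|$. A natural approach is probabilistic: for each $x \in X$, pick $y(x) \in N_Y(x)$ independently and uniformly at random, producing candidate triples $(x,\, y(x),\, -x-y(x))$. This family is an additive matching except when either two triples share a coordinate (that is, $y(x) = y(x')$ or $z(x) = z(x')$ for distinct $x,x'$) or some off-diagonal relation $x_i + y(x_j) + z(x_k) = 0$ holds with $(i,j,k)$ not constant. The lower bound $|N_Y(\cdot)| \ge \delta_1 N$ controls the probability of each bad event, while the upper bound $|N_Y(\cdot)| \le \delta_2 N$ controls the number of witnesses per bad configuration; a Markov/deletion argument then leaves a matching of the desired size. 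With the matching in hand, its size is at most $N f(N) \le N f(\delta_2^{-1})$ by the hypothesised bound and monotonicity of $f$ together with $\delta_2^{-1} \le N$. Hence $(\delta_1/\delta_2)|X| \ll N f(\delta_2^{-1})$, which rearranges to the stated conclusion.

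The delicate step is the extraction itself. The off-diagonal bad events involve triples of indices, so the na\"ive expected count scales as $|X|^3/(\delta_1 N)$, which is only $O(|X|)$ when $|X| \ll \sqrt{\delta_1 N}$ --- far too restrictive. To recover a matching linear in $|X|$, the extraction must be refined: sampling on a random subset of $X$ of carefully tuned size, iterating, or replacing the one-shot random choice by a greedy-with-deletion procedure that uses the upper bound $|N_Y(\cdot)| \le \delta_2 N$ to charge bad events on a per-triple basis rather than a per-pair basis. Getting this quantitatively right is where I expect the argument to require the most work.
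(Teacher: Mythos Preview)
Your overall plan --- extract an additive matching from the regular triple system and apply the hypothesised bound $f$ --- is the right one, and you have correctly located the difficulty. But the gap you flag is genuine and is not repaired by any of the fixes you list. The intermediate target, a matching in $\Z/N\Z$ of size $\gg (\delta_1/\delta_2)|X|$, can simply be unattainable: nothing in the hypotheses prevents $(\delta_1/\delta_2)|X|$ from exceeding $Nf(N)$, the maximal matching size in the ambient group (take $X=Y=Z$ a random set of constant density, so $\delta_1 \asymp \delta_2$ and $(\delta_1/\delta_2)|X|$ is a constant fraction of $N \gg Nf(N)$). No amount of subsampling, iteration, or greedy deletion can produce a matching larger than the true maximum, so the per-triple charging you hope for cannot rescue the argument in $\Z/N\Z$.

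The paper's proof avoids this by changing the target group. With $L = 2l+1$ chosen so that $L\delta_2 \in [1/20,1/10]$ and random $a,b,d \in \Z/N\Z$, it restricts to the short progressions $I_X = a+[-l,l]d$, $I_Y = b+[-l,l]d$, $I_Z = -a-b+[-2l,2l]d$, calling a triangle \emph{valid} if $x\in I_X$ and $y\in I_Y$. The choice $L\delta_2 \le 1/10$ is exactly what makes the bad-event count tractable: each element of a valid triangle lies in at most $L\delta_2 N/N \le 1/5$ other valid triangles in expectation, so a union bound shows each valid triangle is \emph{good} (isolated) with probability $\ge 2/5$. Combined with the lower degree bound $\delta_1 N$, this yields in expectation $\gg |X|\delta_1/(\delta_2^2 N)$ good triangles. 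These are then embedded, via $a+rd \mapsto r \pmod M$, into $\Z/M\Z$ with $M = \delta_2^{-1}$; since $M > 8l$ the map preserves both triangles and non-triangles, so the good triangles form a matching in $\Z/M\Z$, bounded by $Mf(M) = \delta_2^{-1}f(\delta_2^{-1})$. Comparing gives the lemma. The matching obtained is smaller than the one you aim for by a factor of roughly $\delta_2 N$, but it lives in a group smaller by the same factor, and this trade-off is precisely what makes the accounting close. The localisation to random short progressions, with length tuned to $\delta_2^{-1}$, is the missing idea.
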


\begin{proof}
Choose $a$, $b$ and $d$ uniformly and independently from $\group$ such that $d$ is nonzero. Let $L = 2l + 1$ be an odd positive integer such that $1/20 \leq L\delta_2 \leq 1/10$. We may assume that $\delta_2$ is small enough that $L > 20$ by adjusting the implicit constant in \ref{L3Conc}, and so such a choice of $L$ exists.

We say that a triangle $x + y + z = 0$ is \emph{valid} if and only if $x \in I_X := a + [-l,l]d$ and $y \in I_Y := b + [-l,l]d$. Observe that this will imply that $z \in I_Z := -a-b + [-2l,2l]d$. We say that a valid triangle is \emph{good} provided that each of $x$, $y$ and $z$ is in only one valid triangle, namely the triangle in question.

\begin{claim}\label{C1}
Given a valid triangle $x + y + z = 0$, it has a probability at least $2/5$ of being good.
\end{claim}

\begin{proof}[Proof of Claim \ref{C1}]
We first show that the probability that $x$ is in another valid triangle is at most $1/5$. Indeed, for each $y'$ that forms a triangle with $x$, it has a probability of $\frac{L-1}{N-1} \leq \frac{L}{N}$ of lying in $I_Y$, because choosing values $a$, $b$ and $d$ such that $x + y + z = 0$ is valid is equivalent to choosing values $r, s \in [-l,l]$ so that $x = a + rd, y = b + sd$, and then choosing any value of $d$. Thus, each value of $y' \neq y$ will occur with probability $\frac{L-1}{N-1}$ since, for each choice of $t \neq s \in [-l,l]$, there is exactly one choice of $d$ such that $y = b + td$.

There are at most $\delta_2 N$ possible choices of $y'$ to consider, so the union bound guarantees that the probability that $x$ is in another valid triangle is at most $L\delta_2 \leq 1/5$.

The same argument applies to the probability that $y$ is in another valid triangle. For $z$, it turns out that the bound is even stronger, because for each $y'$ forming a valid triangle with $z$, $y'$ has a probability of at most $\frac{L}{N}$ of lying in $I_Y$. This is an upper bound for the probability that, setting $x' = -y'-z$, the triangle $x' + y' + z = 0$ is valid since $x'$ is not guaranteed to lie in $I_X$.

Thus, the probability that either $x$, $y$ or $z$ cause the triangle to be not good is at most $3/5$ by the union bound, and thus the probability that the triangle is good is at least $2/5$.
\end{proof}

\begin{claim}\label{C2}
Given $x \in I_X$, the probability that it is in a good triangle is at least $\frac{\delta_1}{50\delta_2}$.
\end{claim}

\begin{proof}[Proof of Claim \ref{C2}]
For each $y$ that forms a triangle with $x$, it has a probability of $\frac{L}{N}$ of lying in $I_Y$, and, conditioned on this, a probability of at least $2/5$ of forming a good triangle with $x$. In other words, for each $y$ forming a triangle with $x$, it has a probability of at least $\frac{2L}{5N} \geq \frac{1}{50\delta_2 N}$ of forming a good triangle with $x$.

By definition, $x$ can be in at most one good triangle, so these events are disjoint. There are at least $\delta_1 N$ choices of $y$ forming a triangle with $x$, and so the probability that at least one of them is good is at least $\frac{1}{50 \delta_2 N} \times \delta_1 N = \frac{\delta_1}{50\delta_2}$.
\end{proof}

\begin{claim}\label{C3}
The expected number of $x \in X$ in good triangles is at least $\frac{|X| \delta_1}{1000 \delta_2^2 N}$.
\end{claim}

\begin{proof}[Proof of Claim \ref{C3}]
The probability that $x$ is in $I_X$ is $\frac{L}{N} \geq \frac{1}{20 \delta_2 N}$.  Conditioned on this, $x$ has a probability of at least $\frac{\delta_1}{50\delta_2}$ of being in a good triangle. Hence, each $x \in X$ has a probability of $\frac{\delta_1}{1000\delta_2^2 N}$ of lying in a good triangle. Linearity of expectation yields the result.
\end{proof}

Thus, we can choose parameters $a$, $b$ and $d$ in such a way that there are at least $\frac{|X| \delta_1}{1000 \delta_2^2 N}$ good triangles in the corresponding sets $I_X$, $I_Y$ and $I_Z$.

Now, observe that we may map the intervals $I_X$, $I_Y$ and $I_Z$ into $\Z/M\Z$, where $M = \delta_2^{-1}$, in the obvious way. For example, $a + rd \in I_X$ for $r \in [-l,l]$ maps to $r \mod M$. It is clear that this map sends triangles to triangles; since $M > 8l$, this map also preserves the status of not being a triangle. 

In other words, the at least $\frac{|X| \delta_1}{1000 \delta_2^2 N}$ good triangles we found earlier correspond to an additive matching within $\Z/M\Z$. Given our hypothesis on the size of an additive matching, we deduce that 

\begin{equation}
\frac{|X| \delta_1}{1000 \delta_2^2 N} \leq Mf(M)
\end{equation}

\noindent and so

\begin{equation}
\frac{|X|}{N} \leq 1000 \frac{\delta_2}{\delta_1} f(\delta_2^{-1})
\end{equation}

\noindent which is exactly what we sought.

\end{proof}

Next, we prove an analogue of Lemma 6 from \cite{FLpaper}.

\begin{lemma}\label{SmallLem}
Suppose that $\eps, \delta > 0$ satisfy 

\begin{equation*}
\eps \gg g(\delta) \sqrt{f\left(\frac{1}{g(\delta)\delta}\right)}
\end{equation*}

\noindent for the functions $f$ and $g$ defined previously, and that $\delta < \alpha$ as defined immediately before Theorem \ref{MainTheorem}. Suppose we have a collection of $\eps N$ disjoint triangles $x_i + y_i + z_i = 0$, and let $X = \{x_i\}$, defining $Y$ and $Z$ analogously. Then there must be at least $\delta N^2$ triangles $x_i + y_j + z_k = 0$.
\end{lemma}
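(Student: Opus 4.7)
The plan is to prove the contrapositive: assuming $T := |\{(i,j,k) : x_i + y_j + z_k = 0\}| < \delta N^2$, derive $\eps \ll g(\delta)\sqrt{f(1/(g(\delta)\delta))}$. For each $x \in X$ record its triangle-degree $d_X(x) := |\{(j,k) : x + y_j + z_k = 0\}|$, and define $d_Y, d_Z$ analogously. Each diagonal matching triangle contributes $d_X(x_i), d_Y(y_i), d_Z(z_i) \geq 1$, while the assumption $T < \delta N^2$ forces $\sum_x d_X(x) = \sum_y d_Y(y) = \sum_z d_Z(z) < \delta N^2$.

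Dyadically decompose each side via $X_a := \{x \in X : d_X(x) \in [2^a, 2^{a+1})\}$ and analogous $Y_b, Z_c$. Each matching index $i$ then carries a level triple $(a_i, b_i, c_i)$, partitioning the $\eps N$ matching triangles into $O(\log^3 N)$ buckets. Using the summability hypothesis $\sum_i 1/g(2^{-i}) \leq 1/4$ coordinatewise (applied three times, once per side), a weighted pigeon-hole isolates a single level triple $(a,b,c)$ for which (i) the thresholds $2^a, 2^b, 2^c$ sit inside the central range of levels, controlled by a polynomial in $g(\delta)$, and (ii) the induced sub-matching $M' \subseteq X_a \times Y_b \times Z_c$ has size at least $\eps N / g(\delta)^C$ for some absolute constant $C$; levels further out carry negligible aggregate weight.

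A brief cleaning step now produces subsets $X' \subseteq X_a$, $Y' \subseteq Y_b$, $Z' \subseteq Z_c$ on which the hypotheses of Lemma \ref{BigLem} hold with $\delta_1 \asymp \delta_2 \asymp 2^a/N$: iteratively discard any vertex whose degree in the current sub-problem dips below half of the dyadic lower bound, noting that the dyadic upper bound $2^{a+1}$ limits collateral damage so that a constant fraction of $M'$ survives. Lemma \ref{BigLem} then yields $|X'| \ll f(N/2^{a+1}) N$, and since $|X'| \geq \eps N / g(\delta)^C$ by construction, we conclude $\eps \ll g(\delta)^C f(N/2^{a+1})$.

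To extract the square root, use $|X_a| \cdot 2^a \leq T < \delta N^2$ together with $|X_a| \geq \eps N / g(\delta)^C$ to obtain $L := N/2^{a+1} \gg \eps/(g(\delta)^C \delta)$, and write $M_0 := 1/(g(\delta)\delta)$. If $L \leq M_0$, the monotonicity hypothesis $Af(A) < Bf(B)$ gives $L f(L) \leq M_0 f(M_0)$; multiplying $\eps \ll g(\delta)^C f(L)$ by $L$ and substituting yields $\eps^2 \ll g(\delta)^{O(1)} f(M_0) = g(\delta)^{O(1)} f(1/(g(\delta)\delta))$, giving the desired square-root bound (after absorbing the polynomial exponent by redefining $g \mapsto g^{O(1)}$, which preserves the summability condition). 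If instead $L > M_0$, then $f(L) \leq f(M_0)$ by monotonicity of $f$, yielding the even stronger $\eps \ll g(\delta)^{O(1)} f(M_0)$ directly. The principal obstacle is performing the pigeon-hole and cleaning steps simultaneously across all three coordinates while retaining a $1/g(\delta)^{O(1)}$ fraction of matching triangles; the summability condition on $g$ is used in an essential way three times to make this possible.
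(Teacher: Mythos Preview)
Your outline diverges from the paper's route and, as written, has a real gap in the ``cleaning step''. After you pigeon-hole onto a level triple $(a,b,c)$, the degree of a point $x\in X_a$ \emph{inside the restricted system} $X_a\times Y_b\times Z_c$ is only bounded above by $2^{a+1}$; there is no reason it should be $\asymp 2^a$, since most of the $\asymp 2^a$ triangles through $x$ may use $y\notin Y_b$ or $z\notin Z_c$. In particular, many $x$'s may start with sub-problem degree $1$ (their matching partner only), so the deletion rule ``drop anything below half the dyadic lower bound'' can wipe out the whole sub-matching. A second, related problem is that Lemma~\ref{BigLem} demands a single pair $(\delta_1,\delta_2)$ uniformly across $X,Y,Z$, whereas your three dyadic scales $2^a,2^b,2^c$ need not be comparable; even if each side could be cleaned to a tight band, the ratio $\delta_2/\delta_1$ would pick up a factor $\max(2^a,2^b,2^c)/\min(2^a,2^b,2^c)$ that you never control. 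Finally, the closing move ``redefine $g\mapsto g^{O(1)}$'' is not legitimate here: $g$ is fixed in the hypothesis of the lemma, and your argument would only prove the statement with $g$ replaced by some power of $g$, which is strictly weaker and also interferes with the assumed monotonicity of $g(\rho)^2 f(1/(g(\rho)\rho))$.

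By contrast, the paper (following Fox--Lov\'asz) never pigeon-holes onto a single dyadic cell. Instead it runs an iterative pruning on the full sets $X,Y,Z$: at each stage one removes vertices whose degree is either too small or too large relative to the current triangle density, with thresholds dictated by $g$. The summability condition $\sum_i 1/g(2^{-i})\le 1/4$ guarantees that in total at most half of the $\eps N$ matching triangles are destroyed. One ends at some scale $\delta'\le\delta$ with sets of size $\ge \tfrac{\eps}{2g(\delta')}N$ in which every vertex has degree between $\tfrac{\delta'}{6\eps}N$ and $\tfrac{g(\delta')\delta'}{\eps}N$; note the ratio is $6g(\delta')$, not $O(1)$. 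Lemma~\ref{BigLem} then gives
\[
\frac{\eps}{2g(\delta')}\ \ll\ 6g(\delta')\, f\!\left(\frac{\eps}{g(\delta')\delta'}\right),
\]
and the square root falls out after one use of the hypothesis that $M\mapsto Mf(M)$ is increasing (to pass from $f(\eps/(g\delta'))$ to $\eps^{-1}f(1/(g\delta'))$) and one use of the assumed monotonicity of $g(\rho)^2 f(1/(g(\rho)\rho))$ (to replace $\delta'$ by $\delta$). The key point you are missing is that the degree regularisation and the loss factor must be done \emph{simultaneously} by a single iterative procedure, yielding a ratio $\delta_2/\delta_1\asymp g(\delta')$ rather than $\asymp 1$, and with the correct single power of $g$ rather than $g^{O(1)}$.
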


\begin{proof}
The majority of the proof is the same as that in  \cite{FLpaper}, so we will not reproduce it here; the only difference being that we do not mind if elements are in more than one out of $X,$ $Y$ and $Z$, because we are treating them separately in our proof of Lemma \ref{BigLem}. Suffice it to say that we will reach a point where, for some $\delta' \leq \delta$, we have at least $\frac{\delta'}{2} N^2$ triangles in sets $X$, $Y$ and $Z$, where those sets are of size at least $\frac{\eps}{2g(\delta')}N$. These have the property that each element is in at least $\frac{\delta'}{6\eps} N$ and at most $\frac{g(\delta')\delta'}{\eps} N$ triangles.

Applying Lemma \ref{BigLem}, we deduce that

\begin{align*}
\frac{\eps}{2g(\delta')} &\ll (6g(\delta')) f\left(\frac{\eps}{g(\delta')\delta'}\right) \\
\eps &\ll g(\delta')^2 \eps^{-1} f\left(\frac{1}{g(\delta')\delta'}\right) \\
\eps^2 &\ll g(\delta)^2 f\left(\frac{1}{g(\delta)\delta}\right)
\end{align*}

\noindent where in the second line we used the conditions on $f$ and in the third line we used that the right hand side decreases as $\delta'$ decreases.

\end{proof}

We are now ready to prove Theorem \ref{MainTheorem}.

\begin{proof}[Proof of Theorem \ref{MainTheorem}]
We follow the same strategy as in \cite{FLpaper}. Suppose that $A$, $B$ and $C$ are such that we cannot remove all the triangles without removing at least $\eps N$ elements from $A$, $B$ and $C$. Any maximal set of disjoint triangles must have size at least $\frac{\eps}{3} N$, else we could remove all of the elements of those triangles and there would be no triangles left.

Lemma \ref{SmallLem} guarantees that we must have at least $\delta N^2$ triangles in total, where 

\begin{equation*}
\eps \ll g(\delta) \sqrt{f\left(\frac{1}{g(\delta)\delta}\right)}
\end{equation*}

\noindent as required.
\end{proof}

\section{Theorem \ref{MainTheorem} for general $N$}

In this section, we complete the proof of Theorem \ref{MainTheorem} in the case that $N$ need not be prime. First, observe that the prime case of Theorem \ref{MainTheorem} implies that it holds for subsets of $[-M/2,M/2]$. Indeed, provided that the functions $f$ and $g$ exist and satisfy all of the requirements imposed upon them above, then we can deduce the following:

\begin{corollary}\label{CorZed}
Suppose that $A$, $B$ and $C$ are subsets of $[-M/2,M/2]$ with the property that there are at most $\delta M^2$ triangles $a + b + c = 0$ with $a \in A$, $b \in B$ and $c \in C$.

Then, we can remove all of the triangles by deleting at most $\eps M$ elements from $A$, $B$ and $C$, where $\eps$ satisfies

\begin{equation}
\eps \ll g(\delta) \sqrt{f\left(\frac{1}{g(\delta)\delta}\right)}.
\end{equation}

\end{corollary}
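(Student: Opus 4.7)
The plan is to embed $A$, $B$, $C \subseteq [-M/2, M/2]$ into a cyclic group of prime order $N$ just larger than $3M$, and then invoke the prime case of Theorem \ref{MainTheorem} proved in the previous section. By Bertrand's postulate, I pick a prime $N$ with $3M < N \leq 6M$, and view $A, B, C$ as subsets of $\group$ via the natural injection of $[-M/2, M/2]$. The key observation is that for any $a \in A$, $b \in B$, $c \in C$, the sum $a + b + c$ lies in $(-3M/2, 3M/2) \subset (-N/2, N/2)$, so $a + b + c = 0$ in $\Z$ if and only if $a + b + c \equiv 0 \pmod{N}$. Consequently, integer triangles correspond bijectively to mod-$N$ triangles, and destroying the latter is the same as destroying the former.

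Next, I would account for how the density rescales under this embedding. Setting $\tilde{\delta} := \delta M^2 / N^2 \leq \delta$, there are at most $\tilde{\delta} N^2$ triangles in $\group$. Applying the prime case of Theorem \ref{MainTheorem} then produces a removal set of size at most $\tilde{\eps} N$ destroying every mod-$N$ triangle (hence every integer triangle), where
\begin{equation*}
\tilde{\eps} \ll g(\tilde{\delta}) \sqrt{f\left(\frac{1}{g(\tilde{\delta})\tilde{\delta}}\right)}.
\end{equation*}
Since $N/M \leq 6$, taking $\eps := 6\tilde{\eps}$ yields a valid removal of size at most $\eps M$ from the integer sets.

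Finally, I would convert the bound from $\tilde{\delta}$ back to $\delta$ by invoking the standing hypothesis that $g(\rho)^2 f\bigl(1/(g(\rho)\rho)\bigr)$ is decreasing in $\rho$ for $\rho < \alpha$. Since $\tilde{\delta} \leq \delta$, this monotonicity gives $\tilde{\eps} \ll g(\delta) \sqrt{f(1/(g(\delta)\delta))}$, matching the stated bound. (For $\delta$ not small, the conclusion is trivial up to adjusting the implicit constant, since $\eps \leq 1$ always suffices.) I do not anticipate any substantive obstacle: the argument is essentially bookkeeping, and the only real idea needed is the Bertrand-style choice of $N$ large enough to avoid wrap-around, which is precisely what lets the prime case of Theorem \ref{MainTheorem} be applied as a black box.
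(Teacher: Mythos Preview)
Your proposal is correct and follows essentially the same route as the paper: embed $[-M/2,M/2]$ into a prime cyclic group large enough to avoid wrap-around, apply the prime case of Theorem~\ref{MainTheorem}, and rescale. The only cosmetic differences are that the paper takes $N\in[2M,4M]$ rather than $N\in(3M,6M]$, and where you explicitly invoke the monotonicity hypothesis on $g(\rho)^2 f(1/(g(\rho)\rho))$ to pass from $\tilde\delta$ back to $\delta$, the paper simply writes ``by adjusting the implicit constant''; your treatment of that step is arguably cleaner.
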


\begin{proof}
Suppose we have sets $A$, $B$ and $C$ which define $\delta M^2$ triangles. Select a prime $N$ such that $2M \leq N \leq 4M$, and consider the reduction modulo $N$ map $\phi$ taking $[-M/2,M/2]$ to $\group$. This preserves the status of being a triangle, as well as the status of not being a triangle.

The image of $(A,B,C)$ under $\phi$ contains at most $\delta (N/2)^2$ triangles, and thus requires removal of at most $\eps N$ points to remove all of the triangles, where $\eps$ satisfies

\begin{equation}
\eps \ll g(\delta/4) \sqrt{f\left(\frac{4}{g(\delta/4)\delta}\right)}.
\end{equation}

Thus, to remove the triangles from $(A,B,C)$, the deletion of at most $\eps N \leq 4\eps M$ points is necessary. By adjusting the implicit constant, we deduce Corollary \ref{CorZed}.
\end{proof}

We may now deduce that Theorem $\ref{MainTheorem}$ holds in arbitrary finite cyclic groups:

\begin{corollary}\label{CorComp}
Theorem \ref{MainTheorem} holds without the requirement that $N$ is prime.
\end{corollary}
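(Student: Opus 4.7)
The plan is to reduce the general cyclic case to the interval case already established in Corollary \ref{CorZed}, by lifting $\group$ to a bounded interval of integers in a way that preserves triangles and non-triangles.

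Concretely, I would identify $\group$ with $[-N/2, N/2) \subseteq \mathbb{Z}$, so $A$, $B$, $C$ become subsets of this interval. The obstacle to directly applying Corollary \ref{CorZed} is that a modular triangle $a + b + c \equiv 0 \pmod{N}$ lifts to an integer triangle $a + b + c = 0$ only sometimes; in general, $a + b + c \in \{-N, 0, N\}$. To capture all three cases, I would enlarge $C$ to $\tilde C = C \cup (C + N) \cup (C - N) \subseteq [-3N/2, 3N/2]$ and take $M = 3N$. Then every modular triangle $(a, b, c) \in A \times B \times C$ corresponds to a unique integer triangle $(a, b, c')$ with $c' \in \tilde C$ (namely $c' = -(a+b)$), and vice versa. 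Consequently, the integer triangle count in $A \times B \times \tilde C$ equals the modular triangle count, and is at most $\delta N^2 = (\delta/9) M^2$.

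Next, I would apply Corollary \ref{CorZed} to $A$, $B$, $\tilde C \subseteq [-M/2, M/2]$ with parameter $\delta/9$, obtaining a deletion of at most $\eps' M$ elements that destroys all integer triangles, where $\eps' \ll g(\delta/9) \sqrt{f(9/(g(\delta/9)\delta))}$. To translate back, deletions in $A$ and $B$ are kept as is, and each deleted element of $\tilde C$ is reduced mod $N$ to give a deletion in $C$; this is well-defined since every element of $\tilde C$ lies over a unique element of $C$. The bijection between the two triangle types guarantees that all modular triangles are destroyed, using at most $\eps' M = 3\eps' N$ deletions in total.

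The main (and essentially only) obstacle is notational: the bound produced involves $g(\delta/9)$ and $f$ evaluated at a slightly shifted argument, rather than the target $g(\delta)\sqrt{f(1/(g(\delta)\delta))}$. As in the proof of Corollary \ref{CorZed}, this gap is absorbed by ``adjusting the implicit constant,'' relying on the mild regularity of $g$ (that scaling its input by a constant factor changes $g$ by at most a constant factor, which holds in the concrete cases of Corollaries \ref{HappyCor} and \ref{SadCor}, where $g$ is polylogarithmic).
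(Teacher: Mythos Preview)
Your argument is correct and takes a genuinely different route from the paper's. The paper lifts $\group$ to $[0,N-1]$, so that a modular triangle has integer sum $N$ or $2N$; it then invokes the greedy argument to find $\tfrac{\eps}{3}N$ \emph{disjoint} triangles, pigeonholes to a single sum value, shifts one or two of the sets by $N$ to make that sum $0$, and applies Corollary~\ref{CorZed} to subsets of $[-N,N]$. Your approach instead absorbs all three possible integer sums at once by tripling $C$ to $\tilde C = C \cup (C+N) \cup (C-N)$ and working inside $[-3N/2,3N/2]$, which gives an exact bijection between modular triangles and integer triangles in $A\times B\times \tilde C$. This lets you apply Corollary~\ref{CorZed} directly, without passing through disjoint triangles or a case split; the cost is a harmless factor of $9$ in $\delta$ (versus the paper's factor of $2$), absorbed into the implicit constant exactly as in the proof of Corollary~\ref{CorZed}. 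Your reduction is cleaner; the paper's has the minor advantage of staying inside a smaller interval ($M=2N$ rather than $M=3N$), which is irrelevant here.
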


\begin{proof}

Suppose not; then for some (composite) $N$, $\group$ contains sets $A$, $B$ and $C$ which define at most $\delta N^2$ triangles, but require deletion of at least $\eps N$ points to remove the triangles, and where $\eps$ does not satisfy (\ref{MTEQ}) (with a slightly adjusted implicit constant).

As in the proof of Theorem \ref{MainTheorem}, a greedy argument guarantees the existence of $\frac{\eps}{3} N$ disjoint triangles. Consider the map $\pi$ taking $\group$ to $[0,N-1]$ in the obvious way; a triple $(a,b,c)$ in $\group$ is a triangle if and only if its image under $\pi$ has sum either $N$ or $2N$.

For one of the two possibilities for the sum, there are at least $\frac{\eps}{6} N$ disjoint triangles. In the first case, in which the triangles have sum $N$ in the image of $\pi$, consider $\pi(A)$, $\pi(B)$ and $\pi(C) - N$, and in the second case, consider $\pi(A)$, $\pi(B) - N$ and $\pi(C) - N$. Either way, we have at most $\frac{\delta}{2} (2N)$ triangles in $[-N,N]$, which require deletion of at least $\frac{\eps}{12} (2N)$ points to remove, because they define at least that many disjoint triangles. Corollary \ref{CorZed} gives us the result.
\end{proof}

\clearpage

\bibliographystyle{plain}                                
\bibliography{RemovalRefs}

\end{document}